\documentclass[]{theclass}
\usepackage{mathtools}
\definecolor{orchid}{rgb}{0.85, 0.44, 0.84}
\begin{document}

\begin{frontmatter}

\titledata{The Pairing-Hamiltonian property in Cartesian products of graphs}{}

\authordata{Federico Romaniello}
{Dipartimento per l'Innovazione Umanistica, Scientifica e Sociale\\ Universit\`{a} degli Studi della Basilicata, Potenza, Italy\\ }{federico.romaniello@unibas.it}{}

\keywords{Pairing, Perfect matching, Hamiltonian cycle, Prism graph, Cartesian product}
\msc{05C45, 05C70, 05C76}

\begin{abstract}
Let $G$ be a simple graph of even order at least four, and let
$K_G$ denote the complete graph on $V(G)$. A perfect matching of $K_G$
is called a pairing of $G$. The graph $G$ has the Pairing-Hamiltonian
property, or PH-property, if every pairing $M$ of $G$ admits a perfect
matching $N\subseteq E(G)$, disjoint from $M$, such that $M\cup N$ is a
Hamiltonian cycle of $K_G$.

We prove that the PH-property is preserved under Cartesian products.
More precisely, for graphs $G$ and $H$ of even order at least four,
we show that both $G$ and $H$ are PH if and only if every pairing of $G\square H$
admits a Hamiltonian completion contained in a spanning union of
vertex-disjoint prisms determined by a perfect matching of $G$ or
of $H$. Without this support restriction, the converse fails:
a Cartesian product may be PH even when neither of the two graphs
is PH.
\end{abstract}
\end{frontmatter}

\section{Introduction}\label{sec:intro}
The study of extending
perfect matchings to Hamiltonian cycles goes back to the work of Las Vergnas
\cite{LasVergnas1972} and H\"aggkvist \cite{Haggkvist1979} in the 1970s, who
established sufficient conditions of Ore type for such extensions.

An important line of research concerns hypercubes. In 1996, Kreweras
\cite{Kreweras1996} conjectured that every perfect matching of the hypercube
$Q_d$, for $d\geq2$, can be extended to a Hamiltonian cycle.
Fink \cite{Fink2007} proved this conjecture in 2007 by establishing a stronger
statement: every perfect matching of the complete graph on $V(Q_d)$ can be
completed to a Hamiltonian cycle by adding only edges of $Q_d$.

To formulate this property for general graphs, let $K_G$ denote the complete
graph on the same vertex set as a graph $G$. A perfect matching of $K_G$ is
called a \emph{pairing} of $G$. A graph $G$ of even order at least four has the
\emph{Pairing-Hamiltonian property}, or \emph{PH-property}, if every pairing $M$
of $G$ admits a perfect matching $N\subseteq E(G)$, disjoint from $M$, such
that $M\cup N$ is a Hamiltonian cycle of $K_G$. We also say that $G$ is
\emph{PH}, and call $N$ a \emph{Hamiltonian completion} of $M$ in $G$.
Note that the edges of $M$ need not belong to $G$. With this terminology, later
introduced by Alahmadi et al. in \ \cite{Alahmadi2015}, Fink's result can be stated
as follows.

\begin{theorem}[Fink {\cite{Fink2007}}, 2007]\label{thm:fink}
The hypercube $Q_d$ has the PH-property for every $d\geq2$.
\end{theorem}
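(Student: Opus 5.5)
We argue by induction on $d$, using $Q_d=Q_{d-1}\square K_2$: $Q_d$ consists of two copies $Q^0,Q^1$ of $Q_{d-1}$ joined by the perfect matching of edges in direction $d$. For $d=2$, $Q_2=C_4$ has exactly three pairings on four vertices. If $M$ consists of two edges of the $4$-cycle, the other two cycle edges complete it. If $M$ is the pair of diagonals, any perfect matching of $C_4$ completes it. So $Q_2$ is PH, and since the argument below also works from $d=3$ on, this is the base case.

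For the inductive step, let $M$ be a pairing of $Q_d$. Call a pair of $M$ \emph{crossing} if it joins $Q^0$ to $Q^1$, and let $2k$ be the number of vertices of $Q^0$ covered by crossing pairs; this count is even because $|V(Q^0)|$ is even.

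\textbf{Case $k=0$.} Here $M=M_0\cup M_1$ with $M_i$ a pairing of $Q^i$. By induction $M_0$ has a Hamiltonian completion $N_0$ in $Q^0$, giving a Hamiltonian cycle $C_0$ of $Q^0$. Choose an edge $xy\in N_0$ and replace it by the direction-$d$ edges $xx'$, $yy'$ to open $C_0$ into a path. Then find in $Q^1$ a perfect matching $N_1$ such that $M_1\cup N_1$ is a Hamiltonian path of $K_{Q^1}$ from $x'$ to $y'$ whose terminal edges are in $M_1$. Equivalently, take the pairing $M_1'$ of $Q^1$ obtained from $M_1$ by deleting the $M_1$-edges at $x'$ and $y'$ and adding $x'y'$ together with the pair formed by their former partners, and apply induction to $M_1'$. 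This only works if $x'y'$ is itself an $M_1$-edge or the bookkeeping closes correctly, so we choose $xy$ carefully, using the freedom to pick any of the $2^{d-2}$ edges of $N_0$.

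\textbf{Case $k>0$.} Replace each crossing pair $uv'$ ($u\in Q^0$, $v'\in Q^1$) by a pair inside each copy. Concretely, list the crossing pairs $u_1v_1',\dots,u_{2k}v_{2k}'$ and form the auxiliary pairings $M_0=M|_{Q^0}\cup\{u_1u_2,u_3u_4,\dots\}$ and $M_1=M|_{Q^1}\cup\{v_2'v_3',\dots,v_{2k}'v_1'\}$, with the two copies shifted so that gluing yields a single cycle. Apply induction in each copy, then splice the two cycles along the virtual pairs. Each virtual pair $u_{2i-1}u_{2i}$ in $C_0$ and $v'_{2i}v'_{2i+1}$ in $C_1$ is deleted and replaced by the genuine crossing pairs. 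Since each virtual pair is an $M$-edge of its cycle, the resulting union $M\cup N_0\cup N_1$ is a 2-regular spanning subgraph. Its cycle count is governed by the cyclic order in which the virtual pairs appear on $C_0$ and $C_1$.

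\textbf{Main obstacle.} The hard step is controlling connectivity after splicing: the pieces may close into several cycles instead of one. To fix this, I would first try a stronger inductive hypothesis that allows prescribing the relative order of the virtual pairs. Failing that, I would use a different split: choose the direction $d$ so that not all pairs are crossing. If every direction makes everything crossing, $M$ is very rigid and can be handled directly. Otherwise, when the splice gives several cycles, merge two of them along a square $x,y,y',x'$ of $Q_d$ with $xy,x'y'$ both $N$-edges in different cycles, replacing them by $xx',yy'$. This is the standard Fink-style 4-cycle switch, and it preserves both that $N$ is a perfect matching of $Q_d$ and that $N$ is disjoint from $M$. Repeating merges until one cycle remains completes the proof; the core difficulty is guaranteeing that such an alternating square always exists between distinct cycles.
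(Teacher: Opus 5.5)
Your proposal has a genuine gap: in both cases, the connectivity of the final 2-factor is left unproved, and that connectivity is the entire content of the theorem.

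In Case $k=0$, what you need is a Hamiltonian completion of $M_1$ in $Q^1$ that contains the prescribed edge $x'y'$. Plain induction gives no control over which edges a completion uses. Your reformulation via $M_1'$ does not supply this. A completion of $M_1'$ is a perfect matching of all of $Q^1$, so it covers $x'$ and $y'$ by its own edges, and these clash with $xx'$ and $yy'$; you flag this yourself with ``only works if\dots''.

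In Case $k>0$, you fix the virtual pairs on both sides in advance, complete $M_0$ and $M_1$ independently, and splice. The number of resulting cycles depends on how $C_0$ and $C_1$ route between the crossing vertices, and neither application of the induction hypothesis controls this. The proposed repair by square switches presupposes that any two distinct cycles contain $N$-edges $xy$ and $x'y'$ forming a square. You give no argument for this.

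The missing idea is path compression, which the paper attributes to Fink and generalizes in Lemma~\ref{lem:blocks}. Apply induction \emph{sequentially}, so that the first completion determines the pairing used in the second copy.

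First, Case $k=0$ can be avoided entirely. Every pair $uv$ of $M$ has $u\neq v$, so split along a coordinate in which some pair differs. Your instinct to avoid the all-crossing situation is misplaced: that case is harmless, and the one to avoid is $k=0$.

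Then proceed as follows. Pair the $Q^0$-ends of the crossing pairs arbitrarily, complete the resulting pairing of $Q^0$ to a Hamiltonian cycle $C_0$, and delete the virtual pairs. This leaves $k$ paths that together cover $Q^0$. Each joins two crossing ends $u_i,u_j$ and starts and ends with edges of $N_0$. Only now define the pairing of $Q^1$ as $M[V(Q^1)]$ together with the pairs $v_i'v_j'$ formed by the partners of the endpoints of these paths. Complete it by induction, and replace each $v_i'v_j'$ in the resulting cycle by the path $v_i'u_i\cdots u_jv_j'$. This gives a single Hamiltonian cycle with $N=N_0\cup N_1\subseteq E(Q_d)$, and no merging step is needed.

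The paper itself cites the statement rather than reproving it. Within its framework, the statement also follows from Theorem~\ref{lem:prism} by induction on $d$, since $Q_d\cong Q_{d-1}\square K_2$ and $Q_2=C_4$ is PH.
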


Recall that the Cartesian product of two graphs $G$ and $H$, denoted by
$G\square H$, has vertex set $V(G)\times V(H)$, where $(u,x)$ and $(v,y)$ are
adjacent precisely when $u=v$ and $xy\in E(H)$, or $x=y$ and $uv\in E(G)$.
For each $u\in V(G)$, the set $\{u\}\times V(H)$ induces a copy of $H$, called
an \emph{$H$-fibre}. The prism $G\square K_2$ consists of two copies of $G$
joined at corresponding vertices.

For a graph $G$, we write $\omega(G)$ for its number of connected components.
For a matching $M$ and a set $S \subseteq V(G)$ of vertices, let $M[S]$ denote the pairs of
$M$ with both ends in $S$. In a Cartesian product, a perfect matching $F$ of
a graph $G$ is identified with the spanning graph $(V(G),F)$.
In intermediate arguments, a common edge of two perfect matchings is
represented by two labelled copies. Their union is then a multigraph whose
components are even alternating cycles, including cycles of length two.
All final Hamiltonian cycles are ordinary unions of disjoint matchings.
We also use the well-known fact that a graph containing a spanning PH subgraph is itself PH.

The Cartesian-product viewpoint provides a natural setting for extending
Fink's theorem. Since $Q_{d+1}\cong Q_d\square K_2$, hypercubes arise by
iterating the prism operation starting from $Q_2=C_4$. Results concerning
the PH-property and related matching-extension properties under graph
operations can be found in \cite{AGZ2025,AMRZ2025,Alahmadi2015,RomanielloZerafa2023}.

In particular, the authors of \ \cite{AMRZ2025} proved that the
prism of every PH graph is PH, see Theorem \ref{lem:prism}. In the same paper,
they asked whether the Cartesian product of any two PH graphs is PH.
Our main result, Theorem~\ref{thm:main}, answers this question affirmatively.

The proof combines a composition principle with a parity observation
concerning the two projections of a pairing. Path compression appears in
Fink's proof \cite{Fink2007}, and Fon-Der-Flaass
\cite[Lemma~1(i)]{FonDerFlaass2010} formulates the corresponding gluing
principle for two blocks. For disjoint subcubes, Gregor \cite{Gregor2009}
gives a connectedness criterion for completions contained in those subcubes;
see also Fink \cite{Fink2020}. We use a formulation for arbitrary PH blocks
and show how to choose, for each pairing of a Cartesian product, a suitable
partition into prisms.

More precisely, for each pairing of $G\square H$, our construction produces
a completion contained in prisms determined by a perfect matching of $G$
or of $H$. Although the PH-property of $G\square H$ alone does not imply that
both $G$ and $H$ are PH, this support restriction yields an exact
characterization when it is required for every pairing; see Theorem~\ref{thm:prismatic-iff}.

The paper is organized as follows: Section~\ref{sec:preliminaries} contains the preparatory lemmas, including
the published prism theorem, while Section~\ref{sec:mainresults} proves Cartesian
closure and the characterization by prismatic support. Section~\ref{sec:concluding} concludes the manuscript with some final remarks.

\section{Preparatory lemmas}\label{sec:preliminaries}
We collect the results needed to prove the main theorem, beginning with a
composition lemma for vertex-disjoint PH graphs.

\begin{lemma}\label{lem:blocks}
Let $t\geq1$, and let $X_1,\ldots,X_t$ be simple PH graphs of even order
at least four, with pairwise disjoint vertex sets $V_1,\ldots,V_t$. Put $V=V_1\dot{\cup}\cdots\dot{\cup} V_t$, let $X$ be the disjoint union of the PH graphs $X_1,\ldots,X_t$,
where $t\geq1$, and write $V_i=V(X_i)$. Let $M$ be any pairing
of $X$. Define the simple \emph{incidence graph} $\Lambda_M$ with $ V(\Lambda_M)=\{1,\ldots,t\}$ by joining
$i\neq j$ if a pair of $M$ has one end in $V_i$ and the other in $V_j$;
isolated vertices are retained.

Then there exist perfect matchings $N_i\subseteq E(X_i)$ such that, with
$N=\bigcup_{i=1}^{t}N_i$, we have $M\cap N=\varnothing$ and
\begin{linenomath*}
\[
 \omega\bigl((V(X),M\cup N)\bigr)=\omega(\Lambda_M).
\]
\end{linenomath*}
Consequently, $M$ admits a Hamiltonian completion contained in
$\bigcup_{i=1}^{t}E(X_i)$ if and only if $\Lambda_M$ is connected.
\end{lemma}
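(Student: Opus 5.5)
The plan is to use path compression inside each block. Fix the pairing $M$ of $X$ and a block $X_i$. The pairs of $M$ with both ends in $V_i$ form $M[V_i]$. The vertices of $V_i$ matched by $M$ outside $V_i$ form a set $B_i$, which has even size because $|V_i|$ is even and $M[V_i]$ covers an even number of vertices. We will build an auxiliary pairing $M_i$ of $X_i$ that agrees with $M[V_i]$ on $V_i\setminus B_i$ and pairs the vertices of $B_i$ among themselves. Since $X_i$ is PH, $M_i$ has a Hamiltonian completion $N_i\subseteq E(X_i)$ with $N_i\cap M_i=\varnothing$. We then set $N=\bigcup_i N_i$. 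Disjointness $M\cap N=\varnothing$ is automatic: a pair of $M$ inside $V_i$ is also a pair of $M_i$, pairs of $M$ between different blocks are not edges of any $X_i$, and $N$ is a perfect matching of $X$.

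The key step is to choose the pairs of $M_i$ on $B_i$ so that the components of $(V,M\cup N)$ correspond to the components of $\Lambda_M$. Compressing the path structure makes the link clear. In the Hamiltonian cycle $M_i\cup N_i$ of $X_i$, delete the ``virtual'' pairs of $M_i$ on $B_i$. If $|B_i|=2k>0$, what remains is $k$ vertex-disjoint alternating paths covering $V_i$. Each path starts and ends in $B_i$ with $N_i$-edges at both ends, and its interior edges alternate between $N_i$ and $M[V_i]$. If $B_i=\varnothing$, the cycle is instead a component of $(V,M\cup N)$ lying entirely in $V_i$, and $i$ is isolated in $\Lambda_M$. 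So $(V,M\cup N)$ is obtained from the contracted multigraph $\Gamma$ as follows. The vertex set of $\Gamma$ is $\bigcup_i B_i$; its edges are the crossing pairs of $M$ (a perfect matching of $\Gamma$) together with one ``path edge'' for each of the paths above. Each component of $\Gamma$ is a cycle, and its vertices meet a set of blocks that is connected in $\Lambda_M$. Therefore $\omega((V,M\cup N))\ge\omega(\Lambda_M)$ always, with the isolated blocks counted correctly.

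For the reverse inequality, which is the main obstacle, I would not fix the $M_i$ in advance. Instead I would run an exchange argument. Among all admissible choices of $(M_i,N_i)$, take one minimising $\omega((V,M\cup N))$. Suppose two distinct cycles $C,C'$ of $(V,M\cup N)$ both meet $V_i$ through paths of block $i$. Such $i$ exists whenever the blocks met by two cycles lie in the same component of $\Lambda_M$: walk along a path in $\Lambda_M$ from a block of $C$ to a block of $C'$, and some edge $ij$ of this path gives blocks shared by different cycles, or a crossing pair joining them.

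Now choose paths $P\subseteq C$ and $P'\subseteq C'$ inside $X_i$, with endpoints $a,b$ and $a',b'$ in $B_i$. We may take $M_i$ to contain the virtual pairs $\{b,a'\}$ and $\{b',a\}$ (or similar) so that $P,P'$ are consecutive in the Hamiltonian cycle of $X_i$. Replace these two virtual pairs by the single pair-swap $\{a,a'\},\{b,b'\}$, and take a completion $N_i'$ of the new pairing $M_i'$. The delicate point is that $N_i'$ may change every path inside $X_i$, not just $P,P'$. To control this, I would use the simplest bookkeeping. Take $M_i$ on $B_i$ to be determined by the cyclic order of the compressed paths, so that the Hamiltonian cycle of $X_i$ visits the blocks' outgoing pairs in a prescribed cyclic order. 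Re-pairing two virtual pairs then merges the corresponding two cycles of $\Gamma$ regardless of how the internal paths are re-routed, since only the endpoint pairing in $B_i$ matters for $\Gamma$. If this is arranged carefully, the merge strictly decreases $\omega$, contradicting minimality. This gives $\omega((V,M\cup N))=\omega(\Lambda_M)$.

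The final statement follows directly. If $\Lambda_M$ is connected, the construction yields a Hamiltonian cycle. Conversely, any completion inside $\bigcup E(X_i)$ gives a connected $M\cup N$, and by the first inequality (valid for every such $N$) $\Lambda_M$ is connected.
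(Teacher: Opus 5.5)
Your lower bound $\omega((V,M\cup N))\ge\omega(\Lambda_M)$, the disjointness argument and the final ``consequently'' part are fine. The reverse inequality, however, has a genuine gap at the merging step.

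The virtual pairs $\Pi_i$ of $M_i$ on $B_i$ never occur in $(V,M\cup N)$. They affect it only through the completion $N_i$ that the PH-property returns. That property guarantees only that the endpoint pairing $Q_i$ of the paths of $M[V_i]\cup N_i$ makes $Q_i\cup\Pi_i$ a single cycle on $B_i$.

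Let $O_i$ be the pairing of $B_i$ recording the two ends of each path of $M\cup(N\setminus N_i)$ that leaves $V_i$ (through crossing pairs and other blocks). The cycles of $M\cup N$ meeting $V_i$ correspond exactly to the cycles of the multigraph $Q_i\cup O_i$, and $O_i$ does not change when only $N_i$ is modified. After your swap $\Pi_i\to\Pi_i'$, the new completion may yield any $Q_i'$ with $Q_i'\cup\Pi_i'$ Hamiltonian. Nothing excludes $Q_i'=O_i$; this is admissible whenever $O_i\cup\Pi_i'$ is a single cycle. For example, with $k=2$ and two cycles meeting $V_i$ you have $O_i=Q_i=\{\{a,b\},\{a',b'\}\}$, and your $\Pi_i'=\{\{a,a'\},\{b,b'\}\}$ forms a $4$-cycle with it. In that case every path inside $X_i$ closes up with its own outside path, and $\omega$ does not decrease.

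So the justification ``regardless of how the internal paths are re-routed, since only the endpoint pairing in $B_i$ matters'' gets it backwards. That endpoint pairing is precisely what the new completion re-routes, and ``if this is arranged carefully'' leaves the crucial step unspecified.

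The missing idea is to compress the outside paths instead of perturbing the old virtual pairs. Take $M_i'=M[V_i]\cup O_i$, which is still admissible in your class, and let $N_i'$ be any Hamiltonian completion of $M_i'$ in $X_i$. Expanding each pair of $O_i$ back into its outside path turns the Hamiltonian cycle $M_i'\cup N_i'$ into a single cycle of $M\cup N'$ containing all of $V_i$. Cycles avoiding $V_i$ are unchanged. Hence if $q\ge2$ cycles met $V_i$, then $\omega$ drops by $q-1$, contradicting minimality.

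This is exactly the paper's argument, phrased there as deleting $N_i$ and recording the endpoint pairs $R_i=M[V_i]\cup O_i$ of the resulting paths. Once every block lies in a single cycle, a crossing pair forces its two blocks into the same cycle. Together with your lower bound, this gives equality.
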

\begin{proof}
Each $X_i$ has a perfect matching. Among all choices $N=\bigcup_iN_i$, with $N_i$
a perfect matching of $X_i$, choose one minimizing the number of alternating
components of $M\cup N$. At this stage, $N$ is not required to be
disjoint from $M$.

Suppose that $V_i$ meets $q\geq2$ components. Delete all edges of $N_i$. Every
vertex of $V_i$ now has degree one, whereas all other vertices have degree two.
Every cycle meeting $V_i$ contained an edge of $N_i$, since the partner in $N$
of a vertex of $V_i$ lies in the same block. The affected cycles therefore
split into vertex-disjoint paths whose endvertices are exactly the vertices
of $V_i$. No vertex of $V_i$ is internal to one of these paths.

Record the endpoint pairs of these paths as a pairing $R_i$ of $V_i$. Since
$X_i$ is PH, there exists a perfect matching $N_i'$ of $X_i$ such that
$R_i\cup N_i'$ is a Hamiltonian cycle on $V_i$. Replacing $N_i$ by $N_i'$ and
expanding the compressed paths merges the $q$ affected cycles into one, while
leaving every other cycle unchanged. This decreases the number of components
by $q-1$, a contradiction.

Consequently, each block is entirely contained in one cycle. An edge of $M$
between two blocks forces them to lie in the same cycle. Thus all blocks in a
component of $\Lambda_M$ belong to one cycle, whereas neither $M$ nor $N$ joins
different components of $\Lambda_M$. The stated equality follows. Every final
cycle contains a whole block of at least four vertices, so no component has
length two and $M\cap N=\varnothing$.
\end{proof}

The two-block case is the gluing principle of
\cite[Lemma~1(i)]{FonDerFlaass2010}. For subcubes of dimension at least two,
the connectedness condition is Gregor's criterion \cite{Gregor2009}, also
presented in \cite{Fink2020}. The preceding lemma does not require an ambient
hypercube or a product structure within the blocks. The convention of
\cite{FonDerFlaass2010} also admits $K_2$ as a degenerate \emph{Fink graph};
here PH blocks have order at least four.

The next lemma does not require either the graphs $G$ or $H$ to be PH.

\begin{lemma}\label{lem:parity}
Let $G,H$ be simple graphs such that $|V(G)|\,|V(H)|$ is even, and let
$M$ be any pairing of $V(G)\times V(H)$. Define simple graphs $\Gamma_G(M)$
and $\Gamma_H(M)$ on $V(G)$ and $V(H)$, respectively, by
\begin{linenomath*}
\begin{align*}
 uv\in E(\Gamma_G(M))
 &\iff u\neq v\text{ and }(u,x)(v,y)\in M\text{ for some }x,y\in V(H),\\
 xy\in E(\Gamma_H(M))
 &\iff x\neq y\text{ and }(u,x)(v,y)\in M\text{ for some }u,v\in V(G).
\end{align*}
\end{linenomath*}
All vertices, including isolated ones, are retained; the projection graphs
need not be subgraphs of $G$ and $H$. Then every component of $\Gamma_G(M)$
has even order, or every component of $\Gamma_H(M)$ has even order
(possibly both).
\end{lemma}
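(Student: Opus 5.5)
Suppose $\Gamma_G(M)$ has a component $C\subseteq V(G)$ of odd order and $\Gamma_H(M)$ has a component $D\subseteq V(H)$ of odd order. We derive a contradiction by counting the vertices of the rectangle $C\times D$ modulo $2$.

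The key step is that $C\times D$ is a union of pairs of $M$, i.e.\ $M$ restricts to a perfect matching of $C\times D$. Take $(u,x)\in C\times D$ and let $(v,y)$ be its partner in $M$. Consider the first coordinate. If $v\neq u$, then $uv\in E(\Gamma_G(M))$ by definition, so $v$ lies in the same component $C$. If $v=u$, then trivially $v\in C$. In the same way, $y\in D$ using $\Gamma_H(M)$. Hence every pair of $M$ meeting $C\times D$ has both ends in $C\times D$, so $M[C\times D]$ is a perfect matching of $C\times D$. Therefore $|C|\,|D|$ is even.

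This contradicts the fact that $|C|$ and $|D|$ are both odd. Hence either every component of $\Gamma_G(M)$ has even order or every component of $\Gamma_H(M)$ has even order.

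The only point needing care is the case where a pair of $M$ has equal first (or second) coordinates, since no projection edge is created then. This case is harmless because the coordinate simply stays where it is. Notice that the argument shows more than the statement: $M$ matches every rectangle $C\times D$ formed by a component of each projection graph. The hypothesis that $|V(G)|\,|V(H)|$ is even is needed only so that a pairing $M$ exists at all.
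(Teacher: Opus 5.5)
Your proposal is correct and follows essentially the same route as the paper: both show that a rectangle formed by one component of each projection is invariant under $M$, hence has even order, which rules out an odd component in both projections. The only cosmetic difference is that you argue by contradiction for a single rectangle $C\times D$, while the paper shows all rectangles $A_i\times B_j$ are invariant and then reads off the parity conclusion.
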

\begin{proof}
Let $A_1,\ldots,A_s$ and $B_1,\ldots,B_t$ be the vertex sets of the components
of the two projections. If $(u,x)\in A_i\times B_j$ is paired with $(v,y)$,
then $v\in A_i$ and $y\in B_j$. Indeed, a change of coordinate gives an edge
of the corresponding projection, and an unchanged coordinate remains in its
component. Hence each rectangle $A_i\times B_j$ is invariant under $M$ and is
covered by its restriction. Therefore
\begin{linenomath*}
\[
 |A_i|\,|B_j|\equiv0\pmod2\qquad\text{for all }i,j.
\]
\end{linenomath*}
If every $A_i$ has even order, the conclusion holds. Otherwise one $A_i$ has
odd order, forcing every $B_j$ to have even order.
\end{proof}

\begin{lemma}\label{lem:connect}
Let $G$ be a simple PH graph of even order at least four. Let $R$ be
any simple graph with $V(R)=V(G)$ such that every connected component
of $R$ has even order; no inclusion $E(R)\subseteq E(G)$ is assumed. Then
there exists a perfect matching $F\subseteq E(G)$ such that the graph
$(V(G),E(R)\cup F)$ is connected. The perfect matching $F$ is not required to be
disjoint from $E(R)$.
\end{lemma}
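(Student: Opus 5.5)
Since every component $C_1,\ldots,C_k$ of $R$ has even order, the plan is to build a pairing $P$ of $G$ with $P\subseteq$ pairs inside components of $R$, chosen so that each pair of $P$ lies within a single component. Concretely, for each $C_j$ we pair its vertices arbitrarily; this is possible because $|C_j|$ is even. The edges of $P$ need not belong to $R$ or to $G$. The point of this choice is that every pair of $P$ joins two vertices already in the same component of $R$.

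Next we apply the PH-property of $G$ to the pairing $P$. This yields a perfect matching $F\subseteq E(G)$, disjoint from $P$, such that $P\cup F$ is a Hamiltonian cycle of $K_G$. In particular, the graph $(V(G),P\cup F)$ is connected.

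Finally we transfer this connectivity to $(V(G),E(R)\cup F)$. Each edge $ab\in P$ has both ends in one component of $R$, so $a$ and $b$ are joined by a path in $R$. Replacing every $P$-edge of the Hamiltonian cycle by such an $R$-path gives a connected spanning walk in $E(R)\cup F$. Hence $(V(G),E(R)\cup F)$ is connected. The disjointness of $F$ from $E(R)$ is not needed, which matches the statement.

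The only genuinely delicate point is the existence of a pairing refining the component partition of $R$. This is exactly where the evenness hypothesis on the components is used; everything else is immediate from the definition of the PH-property.
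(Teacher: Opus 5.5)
Your proof is correct and follows the paper's argument: you pair vertices within the components of $R$, use the PH-property to obtain $F$, and transfer the connectivity of $P\cup F$ to $(V(G),E(R)\cup F)$. The only difference is cosmetic: you replace each $P$-edge by an $R$-path directly, whereas the paper argues by contradiction that a disconnection of $(V(G),E(R)\cup F)$ would also disconnect $L\cup F$.
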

\begin{proof}
Choose a pairing $L$ of $V(G)$ whose pairs lie within components of $R$. This
is possible because their orders are even; neither $L\subseteq E(R)$ nor
$L\subseteq E(G)$ is required. Since $G$ is PH, there is a perfect matching
$F$ of $G$ for which $L\cup F$ is Hamiltonian. If $(V(G),E(R)\cup F)$ were disconnected,
every pair of $L$ would remain in one of its components, since its ends lie
in the same component of $R$. Then $L\cup F$ would also be disconnected,
a contradiction.
\end{proof}

The last tool needed is the following theorem from \cite{AMRZ2025}, which we restate as:

\begin{thm}[Abreu et al.\ {\cite[Theorem~2.1]{AMRZ2025}}]\label{lem:prism}
Let $G$ be a graph having the PH-property. Then, $G \square K_2$ is PH.
\end{thm}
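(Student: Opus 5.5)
The plan is to reduce the statement to the two composition tools already available, Lemma~\ref{lem:blocks} and Lemma~\ref{lem:connect}, with a case split on whether the pairing uses the $K_2$-direction. Write $V(K_2)=\{1,2\}$ and let $G_1,G_2$ be the two $G$-fibres $V(G)\times\{1\}$ and $V(G)\times\{2\}$ of $G\square K_2$. Let $M$ be an arbitrary pairing of $G\square K_2$. I will use Lemma~\ref{lem:parity} with $H=K_2$ as the organising observation. The projection $\Gamma_{K_2}(M)$ is either the single edge $12$, when some pair of $M$ joins the two fibres, or two isolated vertices, when no pair does. In the first case every component of $\Gamma_{K_2}(M)$ has even order; in the second case Lemma~\ref{lem:parity} says every component of $\Gamma_G(M)$ has even order. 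In the second case this can also be seen directly: $M$ splits as $M_1\cup M_2$ with $M_i$ a pairing of $G_i$, and the projection of $M_1$ is already a perfect matching of $K_G$ contained in $\Gamma_G(M)$.

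\emph{Case 1: some pair of $M$ joins $G_1$ and $G_2$.} I apply Lemma~\ref{lem:blocks} with $t=2$ and blocks $X_1=G_1$, $X_2=G_2$. Both are copies of the PH graph $G$, of even order at least four. The incidence graph $\Lambda_M$ is then the single edge $12$, hence connected. So $M$ has a Hamiltonian completion inside $E(G_1)\cup E(G_2)\subseteq E(G\square K_2)$.

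\emph{Case 2: every pair of $M$ lies inside one fibre.} Now I change the block decomposition to prisms over a perfect matching of $G$.
\begin{itemize}
\item Every component of $R=\Gamma_G(M)$ has even order, so Lemma~\ref{lem:connect} gives a perfect matching $F\subseteq E(G)$ with $(V(G),E(R)\cup F)$ connected.
\item For each $uv\in F$, the block $X_{uv}$ is the $4$-cycle induced on $\{u,v\}\times\{1,2\}$, namely $K_2\square K_2\cong Q_2$. It is PH by Theorem~\ref{thm:fink}, or by a direct check of the three pairings of four vertices. These blocks partition $V(G\square K_2)$ and use only edges of $G\square K_2$.
\item In Lemma~\ref{lem:blocks}, two blocks $X_{uv}$ and $X_{wz}$ are adjacent in $\Lambda_M$ exactly when some pair $(a,i)(b,j)\in M$ has $a\in\{u,v\}$ and $b\in\{w,z\}$. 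This means an edge of $R$ joins the two $F$-edges. In this case no pair of $M$ has equal $G$-coordinates, so every pair of $M$ projects to an edge of $R$.
\item Hence $\Lambda_M$ is the graph obtained from $(V(G),E(R)\cup F)$ by contracting the edges of $F$, and it is therefore connected. Lemma~\ref{lem:blocks} then yields a Hamiltonian completion of $M$ inside the union of the $4$-cycles.
\end{itemize}

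The only step needing care is Case~2. There the naive choice of blocks fails, because $\Lambda_M$ on the two fibres is disconnected. The real content is that connectivity of $\Lambda_M$ for the prism blocks translates exactly into connectivity of $E(R)\cup F$. That in turn is what Lemma~\ref{lem:connect}, and hence the PH-property of $G$, provides once the even-order hypothesis has been verified. I also want to check that Lemma~\ref{lem:blocks} is applicable even when a pair of $M$ coincides with a rung edge of a $4$-cycle. This causes no problem, since that lemma already guarantees $M\cap N=\varnothing$.
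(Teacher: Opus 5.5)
Your proof is correct. There is no proof in the paper to compare it with: the statement is imported as a black box from \cite[Theorem~2.1]{AMRZ2025}. Its only role there is to certify that the blocks $K_2\square H$ in the proof of Theorem~\ref{thm:main} are PH.

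Your argument is essentially the proof of Theorem~\ref{thm:main} run with $H=K_2$, with the two adjustments this degenerate case needs.
\begin{itemize}
\item On the $K_2$ side, Lemma~\ref{lem:connect} is unavailable because $K_2$ is not PH. So when $\Gamma_{K_2}(M)$ is connected you take the two $G$-fibres as blocks, whose incidence graph is exactly $\Gamma_{K_2}(M)$.
\item On the $G$ side, the blocks are $K_2\square K_2\cong Q_2$, which are PH by a direct check, so the prism theorem is never invoked.
\end{itemize}
Lemmas~\ref{lem:blocks}--\ref{lem:connect} do not depend on Theorem~\ref{lem:prism}, so there is no circularity. Your argument therefore makes the paper self-contained: Theorems~\ref{thm:main} and~\ref{thm:prismatic-iff} then rest only on those lemmas and the PH-property of $C_4$. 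The citation only saves space.

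One minor slip in your final remark: in Case~2 no pair of $M$ can be a rung, since no pair crosses between the fibres. The coincidences that can actually occur are with the edges $(u,i)(v,i)$, $uv\in F$. These are indeed absorbed by the conclusion $M\cap N=\varnothing$ of Lemma~\ref{lem:blocks}.
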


\section{Cartesian closure and prismatic support}\label{sec:mainresults}
We first prove Cartesian closure by choosing a partition into PH prisms
adapted to the assigned pairing.

\begin{theorem}\label{thm:main}
If $G$ and $H$ have the PH-property, then $G\square H$ has the PH-property.
\end{theorem}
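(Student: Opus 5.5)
Given a pairing $M$ of $G\square H$, we will use Lemma~\ref{lem:parity} to decide which factor to split into prisms. Assume first that every component of $\Gamma_G(M)$ has even order; the other case is symmetric, with the roles of $G$ and $H$ exchanged. Apply Lemma~\ref{lem:connect} to $G$ with $R=\Gamma_G(M)$. This gives a perfect matching $F\subseteq E(G)$ such that $(V(G),E(\Gamma_G(M))\cup F)$ is connected.

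Each edge $uv\in F$ determines the subgraph of $G\square H$ induced on $\{u,v\}\times V(H)$. This subgraph is isomorphic to $H\square K_2$, since the $H$-fibres over $u$ and $v$ are joined by the edges $(u,x)(v,x)$, which lie in $G\square H$ because $uv\in E(G)$. By Theorem~\ref{lem:prism} each such prism is PH, and it has even order at least $8$. The prisms $X_{uv}$, for $uv\in F$, are vertex-disjoint and together span $V(G)\times V(H)$. They are therefore admissible blocks for Lemma~\ref{lem:blocks}, and the completion obtained lies in their union, which is a subgraph of $G\square H$.

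It remains to verify that the incidence graph $\Lambda_M$ on the blocks is connected. Blocks correspond to edges of $F$, that is, to components of the graph $(V(G),F)$. If $uv\in E(\Gamma_G(M))$ with $u,v$ in different blocks, then some pair $(u,x)(v,y)\in M$ joins those two blocks in $\Lambda_M$. Hence $\Lambda_M$ is exactly the quotient of $(V(G),E(\Gamma_G(M))\cup F)$ obtained by contracting the edges of $F$. Because that graph is connected, $\Lambda_M$ is connected, and Lemma~\ref{lem:blocks} yields a Hamiltonian completion $N$ of $M$ with $N\cap M=\varnothing$.

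All the main tools are already available, so the plan is mainly bookkeeping. The step that needs care is the identification of $\Lambda_M$ with the contracted graph. Pairs of $M$ internal to a fibre, or internal to a single prism, produce no edge of $\Lambda_M$, and this is harmless because such pairs add nothing across blocks. In the opposite direction, every cross-block pair of $M$ does project to an edge of $\Gamma_G(M)$, so no cross-block pair is missed. The hypothesis that $G$ and $H$ have even order at least four is used in two places: it guarantees that $F$ exists, and it ensures that each prism meets the order requirement of Lemma~\ref{lem:blocks}. The parity condition is needed only so that Lemma~\ref{lem:connect} applies to the chosen factor, and Lemma~\ref{lem:parity} provides it for at least one of $G$ or $H$.
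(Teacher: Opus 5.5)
Your proposal is correct and follows the paper's proof step for step. You use Lemma~\ref{lem:parity} to select the factor, Lemma~\ref{lem:connect} to obtain $F$ with $\Gamma_G(M)\cup F$ connected, and the prisms on $\{u,v\}\times V(H)$ for $uv\in F$ (PH by Theorem~\ref{lem:prism}), then apply Lemma~\ref{lem:blocks} after identifying the incidence graph with the contraction of $F$. Your extra checks, that each block induces exactly $K_2\square H$ and that every cross-block pair projects to an edge of $\Gamma_G(M)$, are details the paper leaves implicit, and they are right.
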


\begin{proof}
Let $M$ be any pairing of $G\square H$. By Lemma~\ref{lem:parity}, one of its
projections has all components of even order. Interchanging $G$ and $H$ if
necessary, suppose this is $\Gamma_G(M)$. Lemma~\ref{lem:connect} provides a
perfect matching $F$ of $G$ for which $\Gamma_G(M)\cup F$ is connected.
For every $uv\in F$, set
\begin{linenomath*}
\[
 W_{uv}=\{u,v\}\times V(H).
\]
\end{linenomath*}
Since $F$ is a perfect matching, these sets partition $V(G\square H)$. Each one induces a prism
$K_2\square H$, which is PH by Theorem~\ref{lem:prism}.

The incidence graph of $M$ on these blocks is obtained from
$\Gamma_G(M)\cup F$ by contracting the edges of $F$ and discarding loops and
parallel edges. Indeed, two contracted pairs are adjacent precisely when an
edge of $M$ joins the corresponding blocks. The incidence graph is therefore
connected. Lemma~\ref{lem:blocks} supplies a perfect matching $N$ contained in
the prisms such that $M\cup N$ is Hamiltonian. All prism edges belong to
$G\square H$, so $N\subseteq E(G\square H)$. Since $M$ was arbitrary, the product is PH.
\end{proof}

The result extends immediately to Cartesian products of finitely many PH graphs.

\begin{corollary}\label{cor:finite}
If $G_1,\ldots,G_t$ are PH, where $t\geq1$, then $G_1\square\cdots\square G_t$ is PH.
\end{corollary}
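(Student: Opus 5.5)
The plan is induction on $t$, using Theorem~\ref{thm:main} at each step.

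For $t=1$ there is nothing to prove, since $G_1$ is PH by hypothesis.

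For $t\geq2$, set $P=G_1\square\cdots\square G_{t-1}$. By the induction hypothesis $P$ is PH. Then $G_1\square\cdots\square G_t=P\square G_t$, and Theorem~\ref{thm:main} applied to the PH graphs $P$ and $G_t$ shows that this product is PH.

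Two routine points need to be checked along the way.

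\emph{Parenthesization.} The Cartesian product is associative up to the natural isomorphism $((v_1,\ldots,v_{t-1}),v_t)\mapsto(v_1,\ldots,v_t)$. This map sends edges to edges, because in both descriptions two vertices are adjacent exactly when they differ in a single coordinate, and in that coordinate they are adjacent in the corresponding factor. The PH-property is invariant under isomorphism, since pairings and completions transfer along the bijection of vertex sets.

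\emph{Standing hypotheses.} Theorem~\ref{thm:main} is stated for PH graphs, and by definition these have even order at least four. The order of $P$ is $\prod_{i<t}|V(G_i)|$, which is even and at least four because each factor is. So $P$ is a legitimate input to Theorem~\ref{thm:main}, and the induction goes through.

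The only step that could be called an obstacle is this bookkeeping: the intermediate product $P$ must itself satisfy the hypotheses of Theorem~\ref{thm:main}, namely PH and of even order at least four. The induction hypothesis together with the multiplicativity of orders settles this, so no new combinatorial argument is needed.
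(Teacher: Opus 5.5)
Your proof is correct and is the paper's argument: induction on $t$, applying Theorem~\ref{thm:main} to $G_1\square\cdots\square G_{t-1}$ and $G_t$. The two checks you add are left implicit in the paper but are accurate: associativity of $\square$ up to isomorphism, and the intermediate product having even order at least four.
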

\begin{proof}
Apply Theorem~\ref{thm:main} inductively on $t$.
\end{proof}

The PH-property of $G\square H$ alone does not imply that both $G$ and $H$
are PH. For instance, $C_6\square K_6$ is PH by
\cite[Theorem~2]{Alahmadi2015}, whereas $C_6$ is not PH. Moreover, let $G=C_{10}\square C_4$ and $H=K_{1,10}\square Q_3$.
The graph $G$ is not PH \cite{GauciZerafa2021}, whereas $H$
is non-Hamiltonian by \cite[Theorem~2]{Dimakopoulos2005};
hence neither graph is PH. Nevertheless, $G\square H$ is PH,
as follows by combining \cite[Corollary~1]{Dimakopoulos2005}
with \cite[Corollary~3.1]{AMRZ2025}.
A converse holds under the support restriction furnished by the proof of
Theorem~\ref{thm:main}, which we now formalize.

\begin{definition}\label{def:prismatic}
Let $G,H$ be graphs of even order at least four, and let $M$ be a pairing of
$V(G)\times V(H)$. A Hamiltonian completion $N$ of $M$ in $G\square H$ has
\emph{prismatic support} if at least one of the following holds:
\begin{linenomath*}
\begin{align}
N&\subseteq E(F_G\square H)
&&\text{for some perfect matching }F_G\subseteq E(G),\label{eq:support-G}\\
N&\subseteq E(G\square F_H)
&&\text{for some perfect matching }F_H\subseteq E(H).\label{eq:support-H}
\end{align}
\end{linenomath*}
The choice of $G$ or $H$ and the corresponding perfect matching may depend
on $M$. This definition does not assume that $G$, $H$, or the resulting
prisms are PH.
\end{definition}

\begin{theorem}\label{thm:prismatic-iff}
Let $G,H$ be simple graphs, each of even order at least four. Then
$G$ and $H$ are PH if and only if every pairing of $G\square H$ has a Hamiltonian
completion with prismatic support.
\end{theorem}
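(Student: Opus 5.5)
The plan has two directions. The forward direction comes from the proof of Theorem~\ref{thm:main}. For the converse, we feed in pairings of $G\square H$ that are lifts of a pairing of $G$ (or of $H$), and read off a completion from the prismatic support.

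\emph{Forward direction.} Assume $G$ and $H$ are PH and let $M$ be a pairing of $G\square H$. Rerun the proof of Theorem~\ref{thm:main}. Lemma~\ref{lem:parity} chooses a side, say $G$, on which every component of $\Gamma_G(M)$ has even order. Lemma~\ref{lem:connect} then gives a perfect matching $F$ of $G$ with $\Gamma_G(M)\cup F$ connected. Lemma~\ref{lem:blocks}, applied to the prisms $\{u,v\}\times V(H)$ with $uv\in F$ (PH by Theorem~\ref{lem:prism}), yields a completion $N\subseteq E(F\square H)$. This is exactly condition \eqref{eq:support-G} with $F_G=F$; in the symmetric case we get \eqref{eq:support-H}.

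\emph{Converse.} By symmetry it suffices to show that $G$ is PH. Fix a pairing $L$ of $V(G)$ and lift it to every $G$-fibre:
\[
M=\{(u,x)(v,x): uv\in L,\ x\in V(H)\}.
\]
By hypothesis $M$ has a completion $N$ with prismatic support.

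First we rule out \eqref{eq:support-H}. If $N\subseteq E(G\square F_H)$, then the sets $V(G)\times\{x,y\}$ with $xy\in F_H$ are invariant under both $M$ and $N$. Since $|V(H)|\ge4$, there are at least two such sets, so $M\cup N$ is disconnected, a contradiction. Hence $N\subseteq E(F_G\square H)$ for some perfect matching $F_G$ of $G$.

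Next we project the Hamiltonian cycle $C=M\cup N$ to $G$ via $(u,x)\mapsto u$. Each edge of $M$ projects to an edge of $L$. Each edge of $N$ is either an $H$-edge, which projects to a stay at $u$, or an $F_G$-edge $(u,x)(v,x)$ with $uv\in F_G$. So the $G$-coordinates along $C$ move only along edges of $L\cup F_G$. Since $C$ visits every vertex, the multigraph $L\cup F_G$ is connected.

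It remains to turn connectivity into a Hamiltonian completion. The multigraph $L\cup F_G$ is a disjoint union of alternating cycles, and a common edge counts as a $2$-cycle. Connectivity therefore means it is a single alternating cycle. Because $|V(G)|\ge4$, that cycle cannot be a $2$-cycle, so $F_G\cap L=\varnothing$ and $L\cup F_G$ is a Hamiltonian cycle. Thus $F_G$ is a Hamiltonian completion of $L$ in $G$, and $G$ is PH. Lifting a pairing of $H$ to the $H$-fibres gives the same conclusion for $H$.

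The main obstacle is choosing $M$ so that the support type is forced and the projection argument is clean. The fibrewise lift of $L$ does both at once: it makes \eqref{eq:support-H} impossible, and it ensures that $M$-edges project exactly onto $L$, with no stray edges.
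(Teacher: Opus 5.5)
Your proposal is correct and follows the paper's proof essentially step by step: the forward direction reuses the proof of Theorem~\ref{thm:main}, the converse uses the same fibrewise lift of a pairing of $G$, and \eqref{eq:support-H} is ruled out by the same invariance of the sets $V(G)\times\{x,y\}$. The only cosmetic difference is in showing that $L\cup F_G$ is connected: you project the Hamiltonian cycle to $G$, whereas the paper argues that a proper component $S$ would make $S\times V(H)$ invariant under both matchings, and these are the same argument phrased dually.
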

\begin{proof}
Suppose first that $G$ and $H$ are PH. For any pairing of $G\square H$,
the proof of Theorem~\ref{thm:main} constructs a Hamiltonian completion
contained in the disjoint prisms determined by a perfect matching of $G$
or of $H$. Thus the completion has prismatic support.

Conversely, let $P$ be any pairing of $V(G)$ and consider the pairing
\begin{linenomath*}
\[
 M_P=\{(u,h)(v,h):uv\in P,\ h\in V(H)\}
\]
\end{linenomath*}
of the product. Every edge of $M_P$ preserves the second coordinate. By
assumption, it has a Hamiltonian completion $N$ with prismatic support.

Condition~\eqref{eq:support-H} is impossible. If
$N\subseteq E(G\square F_H)$, each set $V(G)\times\{h,h'\}$, with $hh'\in F_H$,
would be invariant under both $M_P$ and $N$. There are $|V(H)|/2\geq2$ such
nonempty sets, so $M_P\cup N$ would be disconnected.

Consequently, there is a perfect matching $F_G$ of $G$ satisfying
\eqref{eq:support-G}. If $P\cup F_G$ were disconnected, let $S$ be the vertex
set of a proper component. Then $S\times V(H)$ would be invariant under
$M_P$ and under $N$: an edge of $F_G\square H$ either preserves the first
coordinate or changes it along an edge of $F_G$. This again contradicts
connectedness of $M_P\cup N$.

Thus $P\cup F_G$ is connected on at least four vertices. A common edge of
$P,F_G$ would form an isolated component on its two ends. Hence the matchings
are disjoint, and their connected, 2-regular union is a Hamiltonian cycle.
Since $P$ was arbitrary, $G$ is PH. Interchanging $G$ and $H$ proves that
$H$ is PH as well.
\end{proof}

\section{Concluding remarks}\label{sec:concluding}

Theorem~\ref{thm:main} establishes Cartesian closure of the
PH-property, while Theorem~\ref{thm:prismatic-iff} identifies
a support condition under which a converse holds. The distinction
is essential: the existence of Hamiltonian completions in a
Cartesian product does not, by itself, imply the PH-property
of either of the two graphs.

The proof also gives a criterion for a prescribed prismatic
support. Let $H$ be PH, let $G$ be a graph of even order at least
four, and fix a perfect matching $F\subseteq E(G)$.
For any pairing $M$ of $G\square H$, there is a Hamiltonian
completion of $M$ contained in $E(F\square H)$ if and only if
the graph
\begin{linenomath*}
\[
  \bigl(V(G),E(\Gamma_G(M))\cup F\bigr)
\]
\end{linenomath*}
is connected. Indeed, contracting the edges of $F$ gives the
incidence graph of $M$ on the corresponding PH prisms, and
Lemma~\ref{lem:blocks} applies. No PH assumption on $G$ is
needed for this criterion.

Without any restriction on the support of the completion,
one may instead ask whether preservation of the PH-property
with every PH graph is sufficient to recover the property
of the original graph.

\begin{quest}\label{quest:universal}
Let $G$ be a finite simple graph of even order at least four.
If $G\square H$ is PH for every PH graph $H$, must $G$ be PH?
\end{quest}

The examples in Section~\ref{sec:mainresults} concern particular
products and do not answer this question. Nor does
Theorem~\ref{thm:prismatic-iff}, since the hypothesis above
does not require the completions to have prismatic support.

Another direction concerns the existence of PH graphs with
large girth. Alahmadi et al.\
\cite[Open Problem~4]{Alahmadi2015} asked whether there are
infinitely many PH graphs of girth at least five, noting that
no such graph was known to them. A first question is therefore
whether even a single example exists.

\begin{quest}\label{quest:girth}
Does there exist a finite simple PH graph of girth at least five?
\end{quest}

The Cartesian-product construction 
cannot directly provide such an example. Indeed, whenever
$uv\in E(G)$ and $xy\in E(H)$, the vertices
\begin{linenomath*}
\[
  (u,x),\ (v,x),\ (v,y),\ (u,y)
\]
\end{linenomath*}
form a cycle of length four in $G\square H$. Consequently,
every Cartesian product of two PH graphs has girth at most
four. Constructing PH graphs of girth at least five therefore
requires an approach beyond direct applications of Cartesian
closure.

\end{document}